\documentclass[10pt]{amsart}
\usepackage{amsthm}
\usepackage{amsmath}
\usepackage{amssymb}
\usepackage{latexsym}
\usepackage{bbm}
\usepackage[dvips]{graphicx}
\usepackage{enumerate}
\usepackage{verbatim}
\usepackage{amsfonts}
\usepackage{epsfig}
\usepackage{tikz}
\usetikzlibrary{fit}
\usetikzlibrary{shapes}
\usepackage{mathdots}
\usepackage{url}
\usepackage{subfigure}

\newcommand{\R}{\mathbb{R}}

\newtheorem{thm}{Theorem}[section]

\newtheorem{lem}[thm]{Lemma}
\newtheorem{cor}[thm]{Corollary}

\theoremstyle{definition}
\newtheorem{defn}[thm]{Definition}

\newtheorem*{que}{Question}

\begin{document}

\title{A Decomposition Theorem for Higher Rank Coxeter Groups}
\author{Ryan Blair}
\author{Ryan Ottman}
\thanks{Research partially supported by an NSF grant.}

\begin{abstract}
In this paper, we show that any Coxeter graph which defines a higher rank Coxeter group must have disjoint induced subgraphs each of which defines a hyperbolic or higher rank Coxeter group. We then use this result to demonstrate several classes of Coxeter graphs which define hyperbolic Coxeter groups.
\end{abstract} \maketitle

\section{Introduction}
Every Coxeter group can be faithfully represented as a group generated by reflections on a suitably chosen metric vector space, where a \emph{metric vector space} is a finite-dimensional real vector space $V$ with an associated symmetric bilinear form and a \emph{reflection} is a non-trivial form-preserving linear transformation that fixes a codimension one subspace. Given a Coxeter group $G$, one can ask on which metric vector spaces $G$ acts faithfully as a group generated by reflections.  Symmetrically, given a metric vector space $V$, one can ask which Coxeter groups are able to act on $V$ in this way. These questions  are completely solved when the associated bilinear form has no negative eigenvalues. The corresponding spherical and affine Coxeter groups are classified by the well-known Dynkin diagrams and their extensions. This can be found in \cite{MR1066460}.

We call a Coxeter group \emph{hyperbolic} when the standard action of $W$ on a metric vector space has only one negative eigenvalue. It is important to note that there are several competing meanings of the phrase ``hyperbolic Coxeter group" that exist in the literature.
This terminology makes sense since every Coxeter group of this type necessarily acts by isometries on hyperbolic space, although this action is not necessarily cocompact or even cofinite volume \cite{Kr10}.
We call hyperbolic Coxeter groups with a non-degenerate form  \emph{strongly hyperbolic} and ones with a degenerate form \emph{weakly hyperbolic} (this distinction is analogous to the dichotomy between spherical and affine).   A complete classification of hyperbolic Coxeter groups (in our sense) whose defining graph is a tree was completed collectively by Maxwell in \cite{MR511457}, Neumaier in \cite{MR664692} and Ottman in \cite{OttmanThesis}. A classification of arbitrary hyperbolic Coxeter groups is far from complete and the main theorem of this paper represents partial progress toward such a classification.

A \emph{higher rank Coxeter group} is one in which the associated form has at least two negative eigenvalues. The main theorem states that the defining graph of every higher rank Coxeter group must contain a pair of induced subgraphs with disjoint vertex sets, each of which defines a hyperbolic or higher rank Coxeter group. We go on to demonstrate that the converse to the main theorem is false by providing a counterexample, although we do prove a theorem which acts as a partial converse. Finally, we prove a collection of corollaries which identify  large classes of graphs that do not contain disjoint induced hyperbolic subgraphs, and, therefore, define hyperbolic, affine or spherical Coxeter groups. Although spherical and affine Coxeter groups have very restrictive Coxeter graphs, our main result and subsequent corollaries illustrate the rich variety in the Coxeter graphs of hyperbolic Coxeter groups.

\section{Preliminaries}

Given a Coxeter group $G=\langle s_1,...,s_n\mid (s_is_j)^{m_{ij}}\rangle$, define the \emph{Coxeter form matrix} to be the $n\times n$ matrix with entries $C_{ij}=-2\cos\frac{\pi}{m_{ij}}$. Define $V$ to be the $n$ dimensional real vector space with bilinear form $(x,y)=x^TCy$. A \emph{geometric representation space for $G$} is any $n-$dimensional metric vector space $W$ such that $G$ can be faithfully represented as a group generated by reflections on $W$. Notice that $V$ is a geometric representation space for $G$. The \emph{signature} of $G$ is the triple $(p,q,r)$ where $C$ has $p$ positive, $q$ negative, and $r$ zero eigenvalues.

Notice that if every $m_{ij}\in\{1,2,3\}$ then the associated Coxeter graph is a simple graph whose adjacency matrix is $2I-C$; in this case  $G$ is \emph{simply laced}. With this in mind, we define the \emph{generalized adjacency matrix $A$} for any Coxeter graph to be $2I-C$. Notice that if the eigenvalues of $A$ are  $\lambda_1\ge\lambda_2\ge...\ge\lambda_n$, then $G$ is spherical iff $\lambda_1<2$, $G$ is affine iff $\lambda_1=2$, $G$ is strongly hyperbolic iff $\lambda_1>2>\lambda_2$, $G$ is weakly hyperbolic iff $\lambda_1>2=\lambda_2$, and $G$ is higher rank iff $\lambda_2>2$. Also note that all entries of $A$ are nonnegative.

Given the Coxeter graph $\Gamma$ for $G$ and $X\subset\{s_1,...,s_n\}$, define the \emph{induced subgraph} $\Gamma_X$ to be the Coxeter graph associated to the Coxeter group $\langle s_i\in X\mid (s_is_j)^{m_{ij}}=1\rangle$. If $X,Y\subset S$, then $\Gamma_X$ and $\Gamma_Y$ are \emph{disjoint} if $X\cap Y=\emptyset$. They are \emph{separated} if $\Gamma_{X\cup Y}=\Gamma_X\sqcup\Gamma_Y$, i.e. there are no edges connecting a vertex in $X$ to a vertex in $Y$.

A matrix M is \emph{irreducible} if, for every $i,j$ there is some $k$ such that $(M^k)_{ij}\ne0$.
Close inspection will reveal that the generalized adjacency matrix $A$ is irreducible iff $\Gamma$ is connected.
The following is a classic theorem that we will state without proof. It can be found, for example, as Theorem $2.1.4(b)$ in \cite{MR1298430}.

\begin{thm}[Perron-Frobenius]\label{thm:perrfrob}
If $A$ is an irreducible $n\times n$ symmetric matrix with all non-negative entries, and eigenvalues $\lambda_1\ge\lambda_2\ge...\ge\lambda_n$ with corresponding eigenvectors $v_1,...,v_n$. Then $\lambda_1$ has multiplicity $1$, each coordinate of $v_1$ is positive and $v_i (i\ne 1)$ has at least one positive coordinate and at least one negative coordinate.
\end{thm}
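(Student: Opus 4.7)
The plan is to exploit the variational characterization of eigenvalues of a symmetric matrix via the Rayleigh quotient $R(x)=(x^TAx)/(x^Tx)$, whose maximum on $\R^n\setminus\{0\}$ equals $\lambda_1$ and is attained precisely on the $\lambda_1$-eigenspace. First I would take any eigenvector $v$ for $\lambda_1$ and consider its coordinatewise absolute value $|v|$. Because every entry of $A$ is nonnegative, one has $|v|^TA|v|\ge v^TAv=\lambda_1\|v\|^2$, so $|v|$ is itself a maximizer of $R$ and hence an eigenvector with eigenvalue $\lambda_1$. We may therefore assume the chosen eigenvector $v_1$ has only nonnegative entries.

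The next step is to upgrade ``nonnegative'' to ``strictly positive'' using irreducibility. Suppose the coordinate sets $Z=\{i:(v_1)_i=0\}$ and $P=\{i:(v_1)_i>0\}$ are both nonempty. For $i\in Z$ the eigenvalue equation gives $\sum_j A_{ij}(v_1)_j=0$, and since every summand is nonnegative this forces $A_{ij}=0$ whenever $i\in Z$ and $j\in P$; symmetry of $A$ gives the same for $i\in P$, $j\in Z$. A short induction on $k$ then shows $(A^k)_{ij}=0$ for every $i\in Z$, $j\in P$ and every $k\ge 1$, contradicting irreducibility. Hence $v_1$ is strictly positive. Multiplicity one follows: if $w$ were a $\lambda_1$-eigenvector not proportional to $v_1$, replacing $w$ by its component orthogonal to $v_1$ would produce a nonzero $\lambda_1$-eigenvector $w'$ perpendicular to $v_1$, and rerunning the argument on $w'$ would give another strictly positive eigenvector, contradicting $v_1\cdot w'=0$.

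The claim about the remaining eigenvectors is then immediate from the spectral theorem: for $i\ne 1$ we have $v_i\perp v_1$, and since every entry of $v_1$ is positive, $v_1\cdot v_i=0$ forces $v_i$ to contain both positive and negative entries.

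The main obstacle I anticipate is the irreducibility-to-strict-positivity step, where one must translate the combinatorial hypothesis ``some power of $A$ connects every pair of indices'' into an analytic conclusion about the support of $v_1$; the bookkeeping with the sets $Z$ and $P$ and the induction on $A^k$ is the one place where the argument is not completely mechanical. In the symmetric setting the rest of the proof is notably cleaner than in the general Perron-Frobenius theorem because the spectral theorem supplies orthogonality for free and the Rayleigh quotient gives direct access to $\lambda_1$, avoiding the cone-theoretic and fixed-point arguments required when $A$ is not symmetric.
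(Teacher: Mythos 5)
The paper does not prove this statement at all: it is quoted as a classical result and referred to the literature (Theorem 2.1.4(b) of \cite{MR1298430}), so there is no in-paper argument to compare yours against. Judged on its own, your proof follows the standard Rayleigh-quotient route for the symmetric case, and the first two steps are sound: the inequality $|v|^TA|v|\ge v^TAv$ correctly upgrades a $\lambda_1$-eigenvector to a nonnegative one, and the $Z$/$P$ block argument with the induction on $(A^k)_{ij}$ correctly converts irreducibility into strict positivity. The final claim about $v_i$ for $i\ne 1$ is also fine once multiplicity one is known, since then $\lambda_i<\lambda_1$ forces $v_i\perp v_1$ with $v_1$ strictly positive.

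The one genuine gap is in the multiplicity-one step. Rerunning your argument on $w'$ produces a strictly positive $\lambda_1$-eigenvector, but that vector is $|w'|$, not $w'$ itself; and $v_1\cdot|w'|>0$ is entirely consistent with $v_1\cdot w'=0$, so no contradiction has actually been reached as written. To close it, note first that strict positivity of $|w'|$ shows $w'$ has no zero coordinates, and orthogonality to the positive vector $v_1$ shows $w'$ has coordinates of both signs. Then $|w'|+w'$ and $|w'|-w'$ are both nonnegative vectors in the $\lambda_1$-eigenspace, each is nonzero, and each has at least one vanishing coordinate (at an index where $w'$ is negative, respectively positive); this contradicts the strict-positivity step applied to either of them. (Alternatively, equality in $\sum_{i,j}A_{ij}|w'_i||w'_j|\ge\sum_{i,j}A_{ij}w'_iw'_j$ forces $A_{ij}=0$ whenever $w'_i$ and $w'_j$ have opposite signs, which again contradicts irreducibility.) With that repair the proof is complete.
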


The Perron-Frobenius Theorem can be applied to the generalized adjacency matrix  of any connected Coxeter graph $\Gamma$.

\section{A Decomposition Theorem for Higher Rank Graphs}

In this section, we show that the defining graph of every higher rank Coxeter group must contain two disjoint induced subgraphs which each define a hyperbolic Coxeter group.

\begin{defn}
Given a Coxeter graph $\Gamma$ with vertex set $\{s_1,...,s_n\}$ a \emph{label} of $\Gamma$ is a map $\phi: \{s_1,...,s_n\} \rightarrow \mathbb{R}$. Given a vector $v=[x_1,...,x_n]^T$, Let $\phi_v$ be the label such that $\phi_v(s_i)=x_i$.
\end{defn}

As mentioned above, we are interested in the eigenvalues of the generalized adjacency matrix $A$ of $\Gamma$. So
given a vector $v=[x_1,...,x_n]^T$ we would like to calculate $Av=[y_1,...,y_n]^T$. We can easily see that  the formula for $y_i$ is  $2x_i+\sum_{j=1}^n x_j(2\cos\frac{\pi}{m_{ij}})$. Notice that if $m_{ij}=1$ then $i=j$ and the $2x_i$ in front of the summation cancels the $x_i$ summand. Further, if $m_{ij}=2$ then $2\cos\frac{\pi}{m_{ij}}=0$, finally if $m_{ij}>2$ then $2\cos\frac{\pi}{m_{ij}}>0$. Therefore $y_i$ is a positive linear combination of all $x_j$ such that $s_j$ is adjacent to $s_i$ in $\Gamma$. Notice that if $G$ is simply laced then $y_i$ is the sum of all such $x_j$.

\begin{defn}
Suppose $\Gamma$ is a Coxeter graph with vertices $S=\{s_1,...,s_n\}$, $X\subset S$, and $\phi_v$ is a label of $\Gamma$ with $v=[x_1,...,x_n]^T$. If $\Gamma_X$ is the induced subgraph of $\Gamma$ with vertex set $X$, define the \emph{inherited label} $\phi_{v}^{X}$ of $\Gamma_X$ so that $\phi_{v}^{X}(s_i)=x_i$ for each $s_i\in X$.
\end{defn}

\begin{lem}\label{lem:passeigen}
If $M$ is a real symmetric matrix and there exists a vector $v=[x_1,...,x_n]^T$ with $Mv=[y_1,...,y_n]^T$ where $y_i\ge \mu x_i$ for each $i$, then there is an eigenvalue $\lambda$ of M such that $\lambda\ge\mu$.
\end{lem}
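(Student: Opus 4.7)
The plan is to derive the lemma from the Rayleigh--Ritz variational characterization of the largest eigenvalue of a real symmetric matrix. By the spectral theorem, $M$ has an orthonormal eigenbasis with real eigenvalues $\lambda_1\ge\lambda_2\ge\cdots\ge\lambda_n$, and in particular
\[
\lambda_1 \;=\; \max_{w\neq 0}\; \frac{w^T M w}{w^T w}.
\]
So in order to produce an eigenvalue of $M$ that is at least $\mu$, it suffices to exhibit a single test vector $w\neq 0$ for which $w^T M w \ge \mu\, w^T w$; the Rayleigh quotient of $w$ will then furnish $\lambda_1\ge\mu$.

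The natural candidate test vector is $v$ itself. Using $Mv=[y_1,\dots,y_n]^T$ I would expand
\[
v^T M v \;=\; v^T(Mv) \;=\; \sum_{i=1}^n x_i y_i, \qquad v^T v \;=\; \sum_{i=1}^n x_i^2.
\]
Multiplying the hypothesis $y_i\ge\mu x_i$ through by $x_i$ gives $x_i y_i \ge \mu x_i^2$ for each $i$, and summing over $i$ yields $v^T M v \ge \mu\, v^T v$. Plugging this into the Rayleigh quotient then produces the desired eigenvalue.

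The one step that needs care is the multiplication $y_i\ge\mu x_i \mapsto x_i y_i\ge\mu x_i^2$, since this preserves the direction of the inequality only when $x_i\ge 0$. So the proof really uses a tacit nonnegativity hypothesis on the coordinates of $v$. In the intended applications of the lemma, the labeling vector $v$ will always be (inherited from) a Perron--Frobenius eigenvector of the generalized adjacency matrix of a connected subgraph, which has strictly positive coordinates by Theorem~\ref{thm:perrfrob}; thus this positivity comes for free in context and the Rayleigh argument closes without incident. If one wished to drop the implicit positivity entirely, the main obstacle would be that the conclusion simply fails (e.g.\ taking $M=\mathrm{diag}(1,-1)$ and $v=(-1,-1)^T$ admits any $\mu\le$ some value exceeding the top eigenvalue), which further confirms that the proper reading of the lemma is under the positivity convention.
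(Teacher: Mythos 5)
Your proof is essentially the same as the paper's: both bound the Rayleigh quotient from below via $v^TMv=\sum_i x_iy_i\ge\mu\sum_i x_i^2=\mu\,v^Tv$ and then compare against the spectral decomposition to conclude $\lambda_1\ge\mu$. Your observation that the step $x_iy_i\ge\mu x_i^2$ silently requires $x_i\ge 0$ is correct and applies equally to the paper's own proof, which makes the same tacit assumption; the lemma as literally stated is false (take $M=-I$, $v=[-1,0]^T$, $\mu=5$: then $Mv=[1,0]^T$ satisfies $y_i\ge\mu x_i$ but no eigenvalue exceeds $-1$), though the vectors actually fed to the lemma in Theorem~\ref{thm:HRimpdisg} have positive entries, so nothing downstream breaks.
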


\begin{proof}
(Note: In this proof, $(,)$ is standard dot product on $\R^n$ and $|v|$ is the standard norm). Consider
\begin{align*}
(v,Mv)&=[x_1,...,x_n]\cdot[y_1,...,y_n]\\
&=x_1y_1+...+x_ny_n\\
&\ge \mu x_1^2+...+\mu x_n^2\\
&=\mu(x_1^2+...+x_n^2)\\
&=\mu|v|^2\\
\end{align*}

Given that we have a real symmetric matrix, we can find an orthonormal basis of eigenvectors $\{\omega_1,...,\omega_n\}$ with $M\omega_i=\lambda_i\omega_i$ and $\lambda_1\ge\lambda_2\ge...\ge\lambda_n$. Then we can write $v$ in this basis $v=b_1\omega_1+...+b_n\omega_n$, and we can see $Mv=b_1\lambda_1\omega_1+...+b_n\lambda_n\omega_n$. Again, consider the inner product
\begin{align*}
(v,Mv)&=b_1^2\lambda_1 (\omega_1,\omega_1)+...+b_n^2\lambda_n(\omega_n,\omega_n)\\
&=b_1^2\lambda_1+...+b_n^2\lambda_n\\
&\le \lambda_1(b_1^2+...+b_n^2)\\
&=\lambda_1|v|^2
\end{align*}
Combining these calculations, we have
\begin{align*}
\lambda_1|v|^2&\ge\mu|v|^2\\
\lambda_1&\ge\mu
\end{align*}
\end{proof}


\begin{thm}\label{thm:HRimpdisg}
If $\Gamma$ is a connected Coxeter graph whose associated Coxeter group is higher rank, then there exists disjoint induced subgraphs of $\Gamma$ which each define Coxeter groups that are hyperbolic or higher rank.
\end{thm}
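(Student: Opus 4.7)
The plan is to let an eigenvector for the second eigenvalue of $A$ choose the partition. Let $v=[x_1,\ldots,x_n]^T$ be an eigenvector of the generalized adjacency matrix $A$ of $\Gamma$ with eigenvalue $\lambda_2$; by the higher rank hypothesis $\lambda_2>2$. Set $X^+=\{s_i:x_i>0\}$, $X^-=\{s_i:x_i<0\}$, and $X^0=\{s_i:x_i=0\}$. Since $\Gamma$ is connected, $A$ is irreducible and the Perron--Frobenius Theorem applies. The top eigenvector of $A$ is strictly positive, and $v$ is orthogonal to it (since $A$ is symmetric and $\lambda_2<\lambda_1$ by Perron--Frobenius), so $v$ must have at least one positive and one negative coordinate; hence $X^+$ and $X^-$ are nonempty and disjoint, the candidate pair of induced subgraphs being $\Gamma_{X^+}$ and $\Gamma_{X^-}$.

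Next I would show that $\Gamma_{X^+}$ defines a hyperbolic or higher rank Coxeter group. Let $A^+$ denote the generalized adjacency matrix of $\Gamma_{X^+}$ and let $w^+\in\R^{|X^+|}$ be the vector inducing the inherited label $\phi_v^{X^+}$; every entry of $w^+$ is strictly positive. For $s_i\in X^+$, the identity $Av=\lambda_2 v$ reads
\[
\lambda_2 x_i \;=\; \sum_{s_j\sim s_i} 2\cos\tfrac{\pi}{m_{ij}}\cdot x_j,
\]
and the right-hand side splits according to whether $s_j\in X^+$, $X^-$, or $X^0$. The $X^0$ contribution vanishes, and the $X^-$ contribution is a sum of nonpositive terms (negative $x_j$ against nonnegative coefficient). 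Dropping those contributions yields the entrywise bound
\[
(A^+ w^+)_i \;=\; \sum_{\substack{s_j\in X^+\\ s_j\sim s_i}} 2\cos\tfrac{\pi}{m_{ij}}\cdot x_j \;\ge\; \lambda_2 x_i \;=\; \lambda_2 (w^+)_i.
\]
By Lemma \ref{lem:passeigen}, $A^+$ then has an eigenvalue at least $\lambda_2>2$, so the top eigenvalue of $A^+$ exceeds $2$ and $\Gamma_{X^+}$ is hyperbolic or higher rank.

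The symmetric argument with $-v$ in place of $v$ delivers the same conclusion for $\Gamma_{X^-}$: the restricted vector is then positive on $X^-$, and the cross terms from $X^+$ now contribute nonnegatively, so Lemma \ref{lem:passeigen} again gives an eigenvalue $\ge \lambda_2 > 2$. I do not anticipate a significant obstacle; the only delicate point is orienting sign conventions so that the componentwise inequality feeds Lemma \ref{lem:passeigen} correctly on the $X^-$ side, and replacing $v$ by $-v$ handles this automatically. The conceptual content is simply that the sign partition of a second eigenvector cuts $\Gamma$ into two disjoint induced subgraphs on each of which the restricted eigenvector witnesses $\lambda_1\ge\lambda_2>2$.
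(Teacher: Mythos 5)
Your proposal is correct and follows essentially the same route as the paper: partition the vertices by the sign of a $\lambda_2$-eigenvector, drop the cross-sign contributions to get the entrywise inequality, and invoke Lemma \ref{lem:passeigen} on each piece. The only cosmetic difference is that you derive the mixed signs of $v$ from orthogonality to the Perron vector rather than quoting that clause of Theorem \ref{thm:perrfrob} directly.
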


\begin{proof}
Suppose we have $\Gamma$ with vertex set $S=\{s_1,...,s_n\}$. Suppose the generalized adjacency matrix $A$ of $\Gamma$ has eigenvalues  $\lambda_1\ge\lambda_2\ge...\ge\lambda_n$ and associated eigenvectors $v_1,...,v_n$. Since $\Gamma$ is higher rank, we know that $\lambda_1\ge\lambda_2>2$ and, by the Perron-Frobenius Theorem (\ref{thm:perrfrob}), $v_2=[x_1,...,x_n]^T$ has some positive and some negative coordinates. Examine the label $\phi_{v_2}$ of $\Gamma$ and define $P:=\{s_i|\phi_{v_2}(s_i)>0\}=\{s_{k_1},...,s_{k_p}\}$ and $N:=\{s_i|\phi_{v_2}(s_i)<0\}=\{s_{l_1},...,s_{l_q}\}$. Let $\phi_{v_2}^{P}$ and $\phi_{v_2}^{N}$ be the inherited labels of $\Gamma_P$ and $\Gamma_N$ respectively so that $\phi_{v_2}^{P}(s_{k_i})=x_{k_i}$ for $i\in \{1,...,p\}$ and $\phi_{v_2}^{N}(s_{l_i})=x_{l_i}$ for $i\in \{1,...,q\}$. Let $A_P$ be the generalized adjacency matrix of $\Gamma_P$ and let $A_N$ be the generalized adjacency matrix of $\Gamma_N$. Suppose $A_P[x_{k_1},...,x_{k_p}]^T=[w_1,...,w_p]^T$ and $A_N[-x_{l_1},...,-x_{l_q}]^T=[z_1,...,z_q]^T$.

\medskip

Claim:  $w_i\ge\lambda_2 x_{k_i}$ for $i\in \{1,...,p\}$ and $z_i\ge\lambda_2 (-x_{l_i})$ for $i\in \{1,...,q\}$.

\medskip

\emph{Proof of claim:} Since $Av_2=\lambda_2 v_2$, then

\begin{tabular}{ l l }
$\lambda_2 x_{k_i}$ & $=2x_i+\sum_{j=1}^n x_j(2\cos\frac{\pi}{m_{ij}})$\\
& $=\sum_{\{j|s_j\in P\}} |x_j(2\cos\frac{\pi}{m_{ij}})|-\sum_{\{j|s_j\in N\}} |x_j(2\cos\frac{\pi}{m_{ij}})|$\\
& $\leq \sum_{\{j|s_j\in P\}} |x_j(2\cos\frac{\pi}{m_{ij}})|$\\
& $=w_i.$
\end{tabular}

A similar argument shows $z_i\ge\lambda_2 (-x_{l_i})$ for $i\in \{1,...,q\}$.$\square$

\medskip

By Lemma \ref{lem:passeigen}, there must be some eigenvector for $A_P$ which has eigenvalue $\lambda\ge\lambda_2>2$. Hence, $\Gamma_P$ defines a hyperbolic or higher rank Coxeter group. Similarly, $\Gamma_N$ defines a hyperbolic or higher rank Coxeter group.

\end{proof}

The following corollary is the contrapositive of Theorem \ref{thm:HRimpdisg} with an additional connectedness condition added. It is particularly useful for demonstrating that certain Coxeter graphs are hyperbolic, affine or spherical.

\begin{cor}\label{cor:nohypdisimpnothr}
If $\Gamma$ is a connected Coxeter graph that contains no pair of connected disjoint induced subgraphs which each define a  hyperbolic or higher rank Coxeter group, then $\Gamma$ does not define a higher rank Coxeter group. Therefore, $\Gamma$ is spherical, affine or hyperbolic.
\end{cor}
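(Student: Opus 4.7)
The plan is to deduce the corollary from Theorem~\ref{thm:HRimpdisg} by strengthening the two disjoint induced subgraphs produced there from merely ``hyperbolic or higher rank'' to ``connected and hyperbolic or higher rank.'' I would argue the contrapositive: assume $\Gamma$ is connected and higher rank, and show one can extract two \emph{connected} disjoint induced subgraphs each defining a hyperbolic or higher rank group.

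First I would invoke Theorem~\ref{thm:HRimpdisg} to produce disjoint induced subgraphs $\Gamma_P$ and $\Gamma_N$, each hyperbolic or higher rank. The key observation is that the generalized adjacency matrix of a disconnected Coxeter graph is block-diagonal with one block per connected component, so its spectrum is the union (with multiplicity) of the spectra of its components. Being hyperbolic or higher rank means $\lambda_1>2$, so at least one component of $\Gamma_P$ has largest eigenvalue exceeding $2$, and hence is itself hyperbolic or higher rank; the same holds for $\Gamma_N$. Choosing such a component $\Gamma_{P'}\subseteq\Gamma_P$ and $\Gamma_{N'}\subseteq\Gamma_N$ yields two \emph{connected} induced subgraphs, and they remain disjoint because their vertex sets lie inside the disjoint sets $P$ and $N$. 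This proves the first sentence of the corollary.

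For the final sentence, I would use that $\Gamma$ is connected, so by Perron--Frobenius (Theorem~\ref{thm:perrfrob}) its largest eigenvalue $\lambda_1$ has multiplicity one. Since $\Gamma$ is not higher rank, $\lambda_2\le 2$. Combining these, the possibilities are $\lambda_1<2$ (spherical), $\lambda_1=2$ with $\lambda_2<2$ (affine), or $\lambda_1>2\ge\lambda_2$ (hyperbolic, either weakly if $\lambda_2=2$ or strongly if $\lambda_2<2$), using the signature criteria listed in the preliminaries.

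I do not expect any serious obstacle here; the only substantive point is the block-diagonal/spectrum decomposition for a disconnected Coxeter graph, which is immediate from the definition of the generalized adjacency matrix. The rest is bookkeeping: verifying that passing to a component preserves disjointness and that, for connected $\Gamma$, the only options left after excluding ``higher rank'' are exactly spherical, affine, and hyperbolic.
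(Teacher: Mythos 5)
Your proof is correct and follows the route the paper intends: the corollary is stated there as simply the contrapositive of Theorem~\ref{thm:HRimpdisg} with connectedness added, and no separate proof is given. Your block-diagonal/spectrum argument for extracting a connected component with $\lambda_1>2$ from each of $\Gamma_P$ and $\Gamma_N$ is exactly the detail the paper leaves implicit, and your case analysis for the final sentence matches the eigenvalue classification in the preliminaries.
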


\section{A Partial Converse}

In the previous section, we showed that any graph which defines a higher rank Coxeter group must have disjoint induced subgraphs which each define hyperbolic Coxeter groups. The next logical step is to try to prove the converse, that if Coxeter graph $\Gamma$ has disjoint induced subgraphs which each define hyperbolic Coxeter groups, then $\Gamma$ defines a higher rank Coxeter group. This is unfortunately not the case. A counterexample to this statement can be seen in Figure \ref{fig:counter_converse},
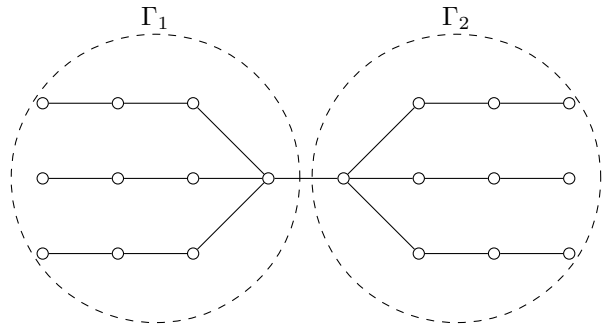
\begin{figure}
\begin{center}
\begin{tikzpicture}[inner sep=1.5, transform shape]

\node(v) at (0,0) [circle,draw]{};
\node(v11) at (1,1) [circle,draw]{};
\node(v12) at (2,1) [circle,draw]{};
\node(v13) at (3,1) [circle,draw]{};
\node(v21) at (1,0) [circle,draw]{};
\node(v22) at (2,0) [circle,draw]{};
\node(v23) at (3,0) [circle,draw]{};
\node(v31) at (1,-1) [circle,draw]{};
\node(v32) at (2,-1) [circle,draw]{};
\node(v33) at (3,-1) [circle,draw]{};
\node(u) at (-1,0) [circle,draw]{};
\node(u11) at (-2,1) [circle,draw]{};
\node(u12) at (-3,1) [circle,draw]{};
\node(u13) at (-4,1) [circle,draw]{};
\node(u21) at (-2,0) [circle,draw]{};
\node(u22) at (-3,0) [circle,draw]{};
\node(u23) at (-4,0) [circle,draw]{};
\node(u31) at (-2,-1) [circle,draw]{};
\node(u32) at (-3,-1) [circle,draw]{};
\node(u33) at (-4,-1) [circle,draw]{};

\draw[-] (u)--(v);
\draw[-] (v)--(v11);
\draw[-] (v11)--(v12);
\draw[-] (v12)--(v13);
\draw[-] (v)--(v21);
\draw[-] (v21)--(v22);
\draw[-] (v22)--(v23);
\draw[-] (v)--(v31);
\draw[-] (v31)--(v32);
\draw[-] (v32)--(v33);
\draw[-] (u)--(u11);
\draw[-] (u11)--(u12);
\draw[-] (u12)--(u13);
\draw[-] (u21)--(u);
\draw[-] (u21)--(u22);
\draw[-] (u22)--(u23);
\draw[-] (u)--(u31);
\draw[-] (u31)--(u32);
\draw[-] (u32)--(u33);

\node[draw,dashed,inner sep=.2pt,label=above:$\Gamma_2$,circle,fit=(v) (v11) (v31) (v13) (v33) ]{};
\node[draw,dashed,inner sep=.2pt,label=above:$\Gamma_1$,circle,fit=(u) (u11) (u31) (u13) (u33) ]{};

\end{tikzpicture} 
\end{center}
\caption{Counter Example to the Converse of Corollary \ref{cor:nohypdisimpnothr}}
\label{fig:counter_converse}
\end{figure}
$\Gamma_1$ and $\Gamma_2$ each define strongly hyperbolic Coxeter groups, by Theorem $2(a)$ of \cite{MR511457}, but the whole graph  also defines a  strongly hyperbolic Coxeter group, by Theorem $2(b)$ of \cite{MR511457}. Using these results, it is easy to construct an infinite family of graphs with this property. In fact, it is also  easy to see that the complete graph $K_n(n>3)$ is always strongly hyperbolic. With this fact in hand, we can see that the family of complete graphs  contain an arbitrarily large number of pairwise disjoint induced subgraphs that each define hyperbolic Coxeter groups. We quickly arrive at a partial converse if we instead focus on separated induced subgraphs.

\begin{thm}\label{thm:sepisimphr}
If $\Gamma$ is a Coxeter graph which contains separated induced subgraphs which each define hyperbolic Coxeter groups, then $\Gamma$ defines a higher rank Coxeter group.
\end{thm}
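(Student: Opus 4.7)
The plan is to produce a two-dimensional subspace of $\mathbb{R}^n$ on which the Rayleigh quotient of the generalized adjacency matrix $A$ of $\Gamma$ strictly exceeds $2$, and then apply the Courant--Fischer min-max characterization of $\lambda_2$. The key structural insight is that Lemma \ref{lem:passeigen} only controls $\lambda_1$, so to conclude the higher-rank condition $\lambda_2 > 2$ a one-vector test is insufficient; we need a two-dimensional test subspace, and separation is precisely what lets two test vectors stack without interfering.

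Concretely, let $\Gamma_X$ and $\Gamma_Y$ be the two separated induced hyperbolic subgraphs with vertex sets $X, Y \subset S$, so $X \cap Y = \emptyset$ and $A_{ij} = 0$ whenever $i \in X$ and $j \in Y$. Since each is hyperbolic, its generalized adjacency matrix ($A_X$ or $A_Y$) has largest eigenvalue strictly greater than $2$. Choose eigenvectors $u$ of $A_X$ and $w$ of $A_Y$ with eigenvalues $\mu_X, \mu_Y > 2$, and extend each to a vector on all of $S$ by setting the coordinates outside $X$ (resp.\ $Y$) equal to zero; call these extensions $\tilde u$ and $\tilde w$.

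Because the supports are disjoint, $\tilde u$ and $\tilde w$ are orthogonal. Because $\Gamma_X$ and $\Gamma_Y$ are separated, a direct block-matrix computation yields
\begin{align*}
(\tilde u, A\tilde u) &= u^T A_X u = \mu_X \|\tilde u\|^2,\\
(\tilde w, A\tilde w) &= w^T A_Y w = \mu_Y \|\tilde w\|^2,\\
(\tilde u, A\tilde w) &= 0.
\end{align*}
Consequently, on the two-dimensional subspace $V := \mathrm{span}(\tilde u, \tilde w)$, every nonzero $v = a\tilde u + b\tilde w$ satisfies
\[
\frac{(v,Av)}{(v,v)} = \frac{a^2 \mu_X \|\tilde u\|^2 + b^2 \mu_Y \|\tilde w\|^2}{a^2 \|\tilde u\|^2 + b^2 \|\tilde w\|^2} \geq \min(\mu_X, \mu_Y) > 2.
\]
By the Courant--Fischer min-max principle, $\lambda_2(A) \geq \min(\mu_X, \mu_Y) > 2$, so $\Gamma$ defines a higher rank Coxeter group.

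I expect the main obstacle to be recognizing that one must upgrade from Lemma \ref{lem:passeigen} (which gives only $\lambda_1 \geq \mu$) to the two-subspace Courant--Fischer statement in order to reach $\lambda_2 > 2$. Once that is in place, the separation hypothesis does all the remaining work by killing the off-diagonal cross term, and no control on the other entries of $A$ (the block indexed by $S \setminus (X \cup Y)$, or the edges from $X \cup Y$ into it) is ever needed.
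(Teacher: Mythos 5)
Your proof is correct and is essentially the paper's argument in different clothing: the paper picks a negative-norm vector in each separated hyperbolic piece, notes they are perpendicular for the Coxeter form $C$ by separation, and concludes that $C$ has a two-dimensional negative definite subspace and hence at least two negative eigenvalues. Since $(v,Cv)<0$ is exactly the condition that the Rayleigh quotient of $A=2I-C$ exceeds $2$, your Courant--Fischer computation on $\mathrm{span}(\tilde u,\tilde w)$ is the same two-dimensional test-subspace argument.
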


\begin{proof}
Suppose we have two separated induced subgraphs which each define a hyperbolic Coxeter group. Hence, the geometric representation spaces for those subgraphs each contain a vector with negative squared length such that these vectors are perpendicular in the geometric representation space for  $\Gamma$. Thus, the geometric representation space for $\Gamma$ has a $2$-dimensional negative definite subspace and therefore the Coxeter form matrix associated to $\Gamma$ has at least $2$ negative eigenvalues so $\Gamma$ is higher rank.
\end{proof}

The converse to the preceding theorem is also false. For example, the Coxeter graph pictured in Figure \ref{fig:parcon} defines a  higher rank Coxeter group, by Theorem $2(b)$ of \cite{MR511457}. However, it does not contain separated induced subgraphs which each define a hyperbolic Coxeter group.

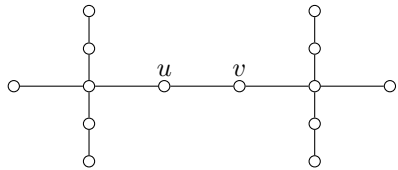
\begin{figure}
\begin{center}
\begin{tikzpicture}[inner sep=1.5]

\node(u1) at (1,0) [circle,draw]{};
\node(u2) at (0,0) [circle,draw]{};
\node(u3) at (1,.5) [circle,draw]{};
\node(u4) at (1,1) [circle,draw]{};
\node(u5) at (1,-.5) [circle,draw]{};
\node(u6) at (1,-1) [circle,draw]{};
\node(u) at (2,0) [circle,draw,label=above:$u$]{};
\node(v) at (3,0) [circle,draw,label=above:$v$]{};
\node(v1) at (4,0) [circle,draw]{};
\node(v2) at (5,0) [circle,draw]{};
\node(v3) at (4,.5) [circle,draw]{};
\node(v4) at (4,1) [circle,draw]{};
\node(v5) at (4,-.5) [circle,draw]{};
\node(v6) at (4,-1) [circle,draw]{};

\draw[-] (u)--(v);
\draw[-] (u)--(u1);
\draw[-] (u2)--(u1);
\draw[-] (u3)--(u1);
\draw[-] (u5)--(u1);
\draw[-] (u5)--(u6);
\draw[-] (u3)--(u4);
\draw[-] (v)--(v1);
\draw[-] (v2)--(v1);
\draw[-] (v3)--(v1);
\draw[-] (v5)--(v1);
\draw[-] (v5)--(v6);
\draw[-] (v3)--(v4);

\end{tikzpicture}
\end{center}
\caption{Counter Example to the Converse of Theorem \ref{thm:sepisimphr}}
\label{fig:parcon}
\end{figure}

The combined results of Theorem \ref{thm:HRimpdisg} and Theorem \ref{thm:sepisimphr} naturally lead to the following questions.

\begin{que}
Which graphs that define hyperbolic Coxeter groups contain disjoint induced subgraphs which define hyperbolic Coxeter groups?
\end{que}

\begin{que}
Which graphs that define higher rank Coxeter groups do not contain separated induced subgraphs which define hyperbolic Coxeter groups?
\end{que}

\section{Some Hyperbolic Coxeter Groups}

In the next corollary, we  reference  `the on-line encyclopedia of integer sequences' which can be found at {\url{http://www.research.att.com/~njas/sequences/Seis.html}}. Specifically, we will consider sequence A001349 which is the number of connected graphs with $n$ nodes. The relevant portion of the sequence is $n=1,...,7$ which is $1$, $1$, $2$, $6$, $21$, $112$, $853$. Using this sequence, we see that there are $4$ connected graphs with fewer than $4$ vertices, and there are $996$ connected graphs with fewer than $8$ vertices.

Similarly, we use the sequence A000055 which is the number of connected trees with $n$ nodes. The relevant portion of the sequence is on $n=1,...,11$ which is $1$, $1$, $1$, $2$, $3$, $6$, $11$, $23$, $47$, $106$, $235$. Using this sequence we see that there are $8$ connected trees with fewer than $6$ vertices and $436$ connected trees with fewer than $12$ vertices.

\begin{cor}
All $996$ connected unlabeled graphs with fewer than $8$ vertices and all $436$ connected unlabeled trees with fewer than $12$ vertices define  Coxeter groups which are hyperbolic, spherical, or affine.
\end{cor}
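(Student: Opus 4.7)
The plan is to apply Corollary~\ref{cor:nohypdisimpnothr}: it suffices to check that none of the listed graphs can contain a pair of disjoint, connected, induced subgraphs, each of which defines a hyperbolic or higher rank Coxeter group. Since an unlabeled graph is read as a simply-laced Coxeter graph (every edge carrying the implicit label $3$), the generalized adjacency matrix coincides with the ordinary $0$-$1$ adjacency matrix, and the ``hyperbolic or higher rank'' condition simplifies to $\lambda_1 > 2$, while ``spherical or affine'' simplifies to $\lambda_1 \le 2$. The whole argument then comes down to a lower bound on the number of vertices of any simply-laced hyperbolic-or-higher-rank Coxeter graph, with a stronger bound in the tree case.

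First I would establish $n \ge 4$ for an arbitrary simply-laced graph and $n \ge 6$ for a tree. The quickest route is the classical spectral characterization due to J.~H.~Smith: a connected simple graph has spectral radius at most $2$ if and only if it is a sub-diagram of a simply-laced finite or affine Dynkin diagram. Alternatively one may argue by hand, checking that the only connected simple graphs on at most $3$ vertices are $A_1$, $A_2$, $A_3$, and $\tilde{A}_2$, and that the only trees on at most $5$ vertices are $A_n$ for $n \le 5$, $D_4$, $D_5$, and $\tilde{D}_4$. Each is spherical or affine. Both bounds are sharp: $K_4$ has spectral radius $3$, and $K_{1,5}$ has spectral radius $\sqrt{5}$.

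Given these bounds, the corollary follows by a simple vertex count. Any pair of vertex-disjoint, connected, induced subgraphs of a graph in the $996$-graph list that each define a hyperbolic or higher rank Coxeter group would use at least $4 + 4 = 8$ vertices in total, exceeding the allowed maximum of $7$; in the tree list, such a pair would use at least $6 + 6 = 12$ vertices, exceeding the allowed maximum of $11$. Hence no such pair exists, and Corollary~\ref{cor:nohypdisimpnothr} forces each listed graph to define a spherical, affine, or hyperbolic Coxeter group.

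The main obstacle is the minimum-vertex-count lemma; citing Smith's characterization sidesteps all explicit eigenvalue computation, but without it one must verify the short enumeration of small connected graphs and small trees directly and confirm that each candidate is sub-diagram of a Dynkin or extended Dynkin diagram. Once that is in place, the pigeonhole step finishing the corollary is immediate.
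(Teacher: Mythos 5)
Your proof is correct and follows essentially the same route as the paper: establish that any connected induced subgraph defining a hyperbolic or higher rank group needs at least $4$ vertices (at least $6$ in the tree case) by enumerating the small cases as $A_1,A_2,A_3,\widetilde{A}_2$ (resp.\ $A_1,\dots,A_5,D_4,D_5,\widetilde{D}_4$), then conclude by a pigeonhole count and Corollary~\ref{cor:nohypdisimpnothr}. The optional appeal to Smith's spectral characterization is a nice shortcut for the enumeration step but does not change the structure of the argument.
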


\begin{proof}
By the above comments, we know there are $4$ connected graphs with fewer than $4$ vertices. We can easily determine these to be $A_1, A_2, A_3,$ and $\widetilde{A}_2$ which are all spherical or affine.

Notice that if $\Gamma$ is an unlabeled graph with fewer than $8$ vertices and we consider a pair of disjoint induced subgraphs, one subgraph out of the pair must have fewer than $4$ vertices.
Therefore, we cannot find a pair of disjoint induced subgraphs which  each define a hyperbolic or higher rank Coxeter group. Hence, by Corollary \ref{cor:nohypdisimpnothr}, $\Gamma$ must define a hyperbolic, affine, or spherical Coxeter group.

Similarly, we know there are $8$ connected trees with fewer than $6$ vertices. Namely, these are $A_1$, $A_2$, $A_3$, $A_4$, $A_5$, $D_4$, $D_5$, and $\widetilde{D}_4$, which are all spherical or affine.

Similar to above, if $\Gamma$ is an unlabeled tree with fewer than $12$ vertices and we consider a pair of disjoint induced subgraph, one subgraph out of the pair must have fewer than $6$ vertices.
Therefore, we cannot find a pair of disjoint induced subgraphs which  each define  a hyperbolic or higher rank Coxeter group. Hence, by Corollary \ref{cor:nohypdisimpnothr}, $\Gamma$ must define a hyperbolic, affine, or spherical Coxeter group.
\end{proof}


\begin{defn}[Subhyperbolic Triple]
Suppose Coxeter graphs $G_1,G_2$ and $G_3$ which each define spherical or affine Coxeter groups are given. Furthermore, for each $G_k$, choose a vertex $v_k$, and labels $m_{12},m_{13},m_{23}\in\{2,...,\infty\}$. Define a \emph{subhyperbolic triple} (as in Figure \ref{fig:subhyperbolic_triple}) to be the Coxeter graph that is the disjoint union of $G_1$,$G_2$, and $G_3$ with an additional edge added between $v_i$ and $v_j$ with label $m_{ij}$ whenever $i\neq j$.
\end{defn}

\begin{figure}
\begin{center}
\begin{tikzpicture}[inner sep=1.5]

\node(v1) at (0,1) [circle, draw]{};
\node(v2) at ({cos(210)},{sin(210)}) [circle,draw]{};
\node(v3) at ({cos(330)},{sin(330)}) [circle,draw]{};

\draw[-] (v1)--node[above left]{$m_{12}$}(v2);
\draw[-] (v1)--node[above right]{$m_{13}$}(v3);
\draw[-] (v3)--node[below]{$m_{23}$}(v2);

\draw [dashed] (0,1.5) circle (.7cm){};
\node at (0,1.5) {$G_1$};

\draw [dashed] (1.5*{cos(210)},1.5*{sin(210)}) circle (.7cm){};
\node at (1.5*{cos(210)},1.5*{sin(210)}) {$G_2$};

\draw [dashed] (1.5*{cos(330)},1.5*{sin(330)}) circle (.7cm){};
\node at (1.5*{cos(330)},1.5*{sin(330)}) {$G_3$};



\end{tikzpicture} 
\end{center}
\caption{Subhyperbolic Triple}
\label{fig:subhyperbolic_triple}
\end{figure}
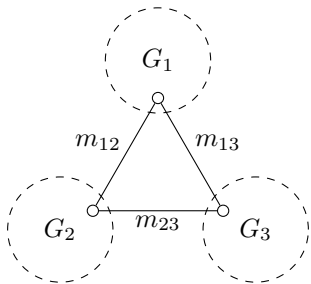

\begin{cor}\label{cor:subhyptrip}
Every Subhyperbolic Triple defines a Coxeter group which is hyperbolic, affine, or spherical
\end{cor}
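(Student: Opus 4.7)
The plan is to apply Corollary \ref{cor:nohypdisimpnothr}. Given a subhyperbolic triple $\Gamma$ built from $G_1,G_2,G_3$ with connector vertices $v_k\in G_k$ and connector edges forming a triangle on $\{v_1,v_2,v_3\}$, it suffices to show that $\Gamma$ admits no pair of disjoint connected induced subgraphs each defining a hyperbolic or higher rank Coxeter group.

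The structural observation I would use is that the only edges of $\Gamma$ joining vertices in distinct $G_i$'s are the three connector edges $v_iv_j$. From this I would extract the following lemma: any connected induced subgraph $H\subseteq\Gamma$ that meets $\{v_1,v_2,v_3\}$ in at most one vertex is contained in some single $G_l$. Indeed, if $H$ avoids $\{v_1,v_2,v_3\}$ altogether then every edge of $H$ is internal to a single $G_l$ and connectedness forces $H$ into that $G_l$. If instead $H$ contains exactly one vertex $v_k$, then $v_k$ is the only vertex of $H$ incident to any connector edge, while the other endpoints of those connector edges lie outside $H$, so $H\setminus\{v_k\}$ can only be joined to $v_k$ through edges of $G_k$, giving $H\subseteq G_k$.

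Such an $H\subseteq G_l$ cannot be hyperbolic or higher rank. If it were, the Perron-Frobenius Theorem (\ref{thm:perrfrob}) would supply a strictly positive eigenvector $w$ of $A_H$ for an eigenvalue $\lambda_1(A_H)>2$. Extending $w$ by zero to the vertex set of $G_l$ produces a nonnegative vector $w'$, and using that $(A_{G_l})_{ij}=(A_H)_{ij}$ for $i,j\in H$ together with the nonnegativity of $A_{G_l}$, one checks coordinate-wise that $A_{G_l}w'\ge\lambda_1(A_H)\,w'$. Lemma \ref{lem:passeigen} would then force $\lambda_1(A_{G_l})\ge\lambda_1(A_H)>2$, contradicting that $G_l$ is spherical or affine. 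Consequently any connected induced subgraph of $\Gamma$ defining a hyperbolic or higher rank Coxeter group must meet $\{v_1,v_2,v_3\}$ in at least two vertices. Applied to a hypothetical disjoint pair $H_1,H_2$, this forces $|H_i\cap\{v_1,v_2,v_3\}|\ge2$ for each $i$, and disjointness then demands four distinct connector vertices inside a three-element set, a contradiction.

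I do not anticipate a serious obstacle. Subhyperbolic triples are designed so that all communication between the $G_i$'s passes through a three-vertex triangle, precisely the structural bottleneck that the decomposition Theorem \ref{thm:HRimpdisg} is built to exploit. The only step requiring genuine verification is the fact that an induced subgraph of a spherical or affine Coxeter graph is itself spherical or affine, and this is handled cleanly by combining the Perron-Frobenius Theorem with Lemma \ref{lem:passeigen}.
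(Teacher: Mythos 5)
Your proof is correct and follows essentially the same route as the paper: show every connected induced subgraph meeting $\{v_1,v_2,v_3\}$ in at most one vertex lies inside a single $G_l$ and hence cannot be hyperbolic or higher rank, then observe that two disjoint such subgraphs would require four connector vertices, and conclude via Corollary \ref{cor:nohypdisimpnothr}. The only difference is that you explicitly verify (via Perron--Frobenius and Lemma \ref{lem:passeigen}) the eigenvalue monotonicity for induced subgraphs, a standard fact the paper simply asserts.
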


\begin{proof}
If $\Gamma$ is a subhyperbolic triple, we will show it is impossible to find connected disjoint induced subgraphs which each define hyperbolic or higher rank Coxeter groups. First, notice that any connected induced subgraph which does not contain $v_1$, $v_2$, or $v_3$ must be a proper induced subgraph of $G_1$, $G_2$ or $G_3$. Since each of $G_1$, $G_2$ and $G_3$ is spherical or affine, any proper induced subgraph must define a spherical Coxeter group. Hence, for an induced subgraph of $\Gamma$ to define a hyperbolic or higher rank Coxeter group, it must contain $v_1$, $v_2$, or $v_3$.

Further, suppose $\Gamma_1$ is an induced subgraph containing only one of $v_1$, $v_2$, or $v_3$. With out loss of generality assume $\Gamma_1$ contains only $v_1$. Then $\Gamma_1$ is an induced subgraph of $G_1$ and, therefore, defines a spherical or affine Coxeter group. Thus, for an induced subgraph of $\Gamma$ define a hyperbolic or higher rank Coxeter group, it must contain at least two of $v_1$, $v_2$, or $v_3$. Since there are only three such vertices, it is impossible to have a pair of connected disjoint induced subgraphs that each define hyperbolic or higher rank Coxeter groups. So, by Corollary \ref{cor:nohypdisimpnothr}, $\Gamma$ defines a spherical, affine, or hyperbolic Coxeter group.
\end{proof}

\begin{cor}\label{cor:sphandaffplus1vertishyp}
If $\Gamma$ is a Coxeter graph with vertex set $S$ and there exists $s\in S$ such that $\Gamma_{S-\{s\}}$ is a disjoint collection of graphs that each define a spherical or affine Coxeter group, then $\Gamma$ defines a spherical, affine, or hyperbolic Coxeter group.
\end{cor}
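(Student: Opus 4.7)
The plan is to apply Corollary \ref{cor:nohypdisimpnothr} contrapositively: I will show that $\Gamma$ contains no pair of connected disjoint induced subgraphs that each define a hyperbolic or higher rank Coxeter group, whence $\Gamma$ must be spherical, affine, or hyperbolic.

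Suppose, for contradiction, that $\Gamma_X$ and $\Gamma_Y$ are two connected disjoint induced subgraphs of $\Gamma$ that each define a hyperbolic or higher rank Coxeter group. Since $X \cap Y = \emptyset$, the distinguished vertex $s$ lies in at most one of $X$ or $Y$. Without loss of generality, assume $s \notin Y$, so that $Y \subseteq S - \{s\}$ and $\Gamma_Y$ is an induced subgraph of $\Gamma_{S - \{s\}}$. Because $\Gamma_Y$ is connected while $\Gamma_{S - \{s\}}$ is by hypothesis a disjoint union of graphs each defining a spherical or affine Coxeter group, the vertex set $Y$ must be contained entirely in a single one of these connected components, call it $H$. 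Thus $\Gamma_Y$ is an induced subgraph of the spherical or affine graph $H$.

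The key ingredient is the standard fact that any induced subgraph of a spherical or affine Coxeter graph is itself spherical or affine. I would justify this via Cauchy eigenvalue interlacing applied to the generalized adjacency matrix: if $B$ is a principal submatrix of a real symmetric matrix $A$, then $\lambda_1(B) \le \lambda_1(A)$. Since $H$ is spherical or affine, $\lambda_1(A_H) \le 2$, and therefore $\lambda_1(A_{\Gamma_Y}) \le 2$, so $\Gamma_Y$ defines a spherical or affine Coxeter group. This contradicts the assumption that $\Gamma_Y$ defines a hyperbolic or higher rank Coxeter group, and the corollary follows.

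The main obstacle, if any, is simply recording the interlacing fact (or, equivalently, the classical fact that induced subdiagrams of spherical/affine Coxeter diagrams are spherical or affine) cleanly; the structural argument itself is immediate once one observes that disjointness of $X$ and $Y$ forces at least one of the two subgraphs to avoid the cut vertex $s$ and thus to sit inside a single component of $\Gamma_{S - \{s\}}$.
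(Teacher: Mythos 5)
Your proof is correct and follows essentially the same route as the paper: both apply Corollary \ref{cor:nohypdisimpnothr} after observing that of any two disjoint connected induced subgraphs, one must avoid $s$ and hence sit inside a single spherical or affine component of $\Gamma_{S-\{s\}}$. You merely make explicit (via Cauchy interlacing on the generalized adjacency matrix) the fact that induced subgraphs of spherical or affine graphs are spherical or affine, which the paper's one-line proof takes for granted.
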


\begin{proof}
Any connected induced subgraphs of $\Gamma$ that do not contain $s$ will define spherical or affine Coxeter groups, therefore, it is impossible to find connected disjoint induced subgraphs which each define hyperbolic or higher rank Coxeter groups. Hence, by Corollary \ref{cor:nohypdisimpnothr}, $\Gamma$ defines a spherical, affine, or hyperbolic Coxeter group.
\end{proof}

\bibliographystyle{plain}
\bibliography{bib}

\end{document}